\documentclass[11pt]{article}


\pagestyle{plain}

\usepackage[utf8]{inputenc}
\usepackage[vietnamese,english]{babel}
\selectlanguage{english}
\usepackage[all, cmtip]{xy}

\usepackage{stmaryrd}

\usepackage{amsmath}
\usepackage{amsxtra}
\usepackage{amscd}
\usepackage{amsthm}
\usepackage{amsfonts}
\usepackage{amssymb}
\usepackage{bbm}

\usepackage[bitstream-charter]{mathdesign}
\usepackage[T1]{fontenc}
\setlength\textwidth{420pt}
\setlength\oddsidemargin{25pt}
\setlength\evensidemargin{25pt}

\usepackage{fancyhdr}
\pagestyle{fancy}
\lhead{}
\rhead{\nouppercase{\leftmark}}

\usepackage{tikz}
\usepackage{tikz-cd}
\usetikzlibrary{matrix,calc,arrows}


\numberwithin{equation}{section}

\newtheorem{theorem}{Theorem}

\newtheorem{lemma-construction}[theorem]{Lemma-Construction}
\newtheorem{proposition}[theorem]{Proposition}

\numberwithin{theorem}{section}
\theoremstyle{definition}

\theoremstyle{remark}


\newcommand\und{\underline}

\newcommand\indlim\varinjlim

\let\fg\undefined

\newcommand{\fg}{{\mathfrak g}}

\newcommand{\ft}{{\mathfrak t}}

\newcommand{\fC}{{\mathfrak C}}

\newcommand{\fS}{{\mathfrak S}}

\newcommand\ZZ{\mathbb{Z}}

\newcommand\bZ{{\ZZ}}

\newcommand\Z{\mathbb{Z}}

\newcommand\A{\mathbb{A}}
\newcommand\N{\mathbb{N}}

\newcommand\rN{\mathrm{N}}

\newcommand\rT{\mathrm{T}}

\newcommand\gl{\mathfrak{gl}}
\newcommand\GL{\mathrm{GL}}

\newcommand\Sp{\mathrm{Sp}}

\newcommand\tr{\mathrm{tr}}

\newcommand\id{\mathrm{id}}
\newcommand\Spec{\mathrm{Spec}}

\newcommand\Gm{\mathbb{G}_m}

\newcommand\Hom{\mathrm{Hom}}

\newcommand\End{\mathrm{End}}

\newcommand\Sym{\mathrm{S}}

\newcommand\diag{\mathrm{diag}}










\newcommand{\quash}[1]{}  
\newcommand{\nc}{\newcommand}

\nc{\al}{{\alpha}} \nc{\be}{{\beta}} \nc{\ga}{{\gamma}}
\nc{\ve}{{\varepsilon}} 
\nc{\La}{{\Lambda}}


\newcommand{\beqn}{\begin{equation*}}
\newcommand{\eeqn}{\end{equation*}}

\newcommand{\beq}{\begin{equation}}
\newcommand{\eeq}{\end{equation}}


\title{Invariant theory for the commuting scheme\\ of symplectic Lie algebras}
\author{Tsao-Hsien Chen and \foreignlanguage{vietnamese}{Ngô Bảo Châu}}

\date{}
\begin{document}
\maketitle

\begin{abstract}
We prove the Chevalley restriction theorem for the commuting scheme of symplectic Lie algebras.
The key step is the construction of the inverse map of the 
Chevalley restriction map called the spectral data map.
Along the way, we establish a certain multiplicative property of the Pfaffian
which is of independent interest.
\end{abstract}

\section{The Chevalley restriction theorem for commuting schemes}

Let $k$ be a field of characteristic zero, or of large prime characteristic. Let $G$ be a reductive group of $k$, $\mathfrak{g}$ its Lie algebra. For every $d\in \N$, we consider the commuting scheme $\fC^d_\fg$ consisting of elements $(x_1,\ldots,x_d)\in \mathfrak{g}^d$ such that $[x_i,x_j]=0$ for all $i,j\in \{1,\ldots,d\}$. The commuting scheme has always been of some interest in invariant theory but it was only recent that it appears as a primordial object in the study of moduli space of Higgs bundles for higher dimensional varieties. It is also poorly understood. For $d=2$, Richardson proved that the open subscheme of $\fC^{2,\rm rss}_{\fg}$ defined by the condition that $x_1,x_2$ are regular and semi-simple is dense smooth and irreducible. Although the reducedness of $\fC_\fg^2$ has the status of a folklore conjecture, there is a little evidence to expect $\fC^d_\fg$ to be reduced in general. For $d\geq 3$, $\fC^d_\fg$ is in general reducible, and components other than the closure of the regular semisimple open subscheme of $\fC^d_\fg$ are unlikely to be reduced. 

In this paper, we are more concerned with the categorical quotient 
\[\mathfrak C^d_{\mathfrak g}\sslash G=\Spec (k[\fC^d_\fg]^G)\] where $k[\fC^d_\fg]^G$ is the ring of $G$-invariant functions on $\fC^d_\fg$. 
We note that $G$ acts on $\mathfrak{g}$ by the adjoint action, and hence on $\mathfrak{g}^d$ 
by the diagonal adjoint action. This action leaves the commuting subscheme $\mathfrak C^d_{\mathfrak g}$ stable and we are interested in generalizing the Chevalley restriction theorem asserting a description of the categorical quotient $\mathfrak C^d_{\mathfrak g}\sslash G$ in terms of a Cartan subalgebra $\ft$ of $\fg$ equipped with the Weyl group action $W$. Let $T$ denote a maximal torus of $G$, $\ft$ its Lie algebra. The Weyl group $W=N_G(T)/T$ acts on $T$ and $\ft$. The embedding $\ft^d \to \mathfrak{g}^d$ factors through the commuting variety $\mathfrak C^d_\fg$ and it induces a homomorphism of algebras
\[ c:  k[\mathfrak C^d_\fg]^G \to k[\ft^d]^W \]
because the restriction of a $G$-invariant function to $\ft^d$ is obviously $W$-invariant. We conjecture that $c$ is always an isomorphism. The classical Chevalley restriction theorem corresponds to the case $d=1$.  
Note that, since $\ft^d\sslash W$ is known to be normal and reduced, the conjecture 
will imply  $\frak C^d_\fg \sslash G$ is normal and reduced.

For $G=\GL_n$, this conjecture was proved by Vaccarino in \cite{Vaccarino:2007wo} (not being aware of Vaccarino's result, we also reprove it in \cite{Chen:2020iu}). Vaccarino's proof in the case $G=\GL_n$ relies on the construction of a map in the opposite direction, to be called the spectral data map
\[s:k[\ft^d]^W \to k[\mathfrak C^d_\fg]^G \]
which is due to Deligne in the case $G=\GL_n$.  Once the spectral data map is constructed, to prove the Chevalley restriction theorem for commuting schemes it is enough to prove that $s\circ c$ and $c\circ s$ are identities. For this, one can use a a result of Procesi in \cite{Procesi:1976ep} which provides a system of generators of the ring $k[\fg^d]^G$. 

In this paper we will prove the Chevalley restriction theorem for the commuting scheme of the symplectic Lie algebra in following the same line on thought. The main novelty is the construction of the spectral data map for $G=\Sp_{2n}$. One should note that for general reductive groups, even in the case $d=1$, we don't know to construct this map without assuming first the Chevalley restriction theorem. For symplectic groups, a key ingredient in our construction is a certain multiplicative property of the Pfaffian. This is an elementary fact of linear algebra which seems to be new and of independent interest.

\section{Deligne's construction}

We will first recall Deligne's beautiful construction of the spectral data map for $\GL_n$
in  \cite[Section 6.3.1]{Deligne}. As a preparation, we will first recall Roby's concept of polynomial laws which will provide a convenient language for Deligne's construction (see \cite{Roby:1963gn} and also \cite{Ferrand}).

Let $A$ be a commutative ring. For every $A$-module $V$, we will denote $V_A$ the functor $R\to V\otimes_A R$ from the category of $A$-algebras to the category of sets. If $V$ and $N$ are $A$-modules, we will denote ${\rm P}(V,N)$ the set of morphism of functors $f:V_A \to N_A$. In case $N$ is not explicitly mentioned, we will understand that $N=A$, i.e., ${\rm P}(V)={\rm P}(V,A)$ and call ${\rm P}(V)$ the set of polynomial laws on $V$.

The connection between polynomial law and usual polynomial can be explained as follows. Let $S_A=A[X_1,\ldots,X_d]$ be the polynomial algebra with free variables $X_1,\ldots,X_d$. For every finite set of elements $v_1,\ldots,v_d \in V$, we have an element $X_1v_1+\cdots+X_d v_d \in V\otimes_A S_A$. If $f$ is a polynomial law on $V$, then $f_{\underline v}=f(X_1v_1 +\ldots X_d v_d)$ is an element of $S_A$ i.e. a polynomial of variables $X_1,\ldots,X_n$ with coefficients in $A$. If $V$ is a free $A$-module and $v_1,\ldots,v_d$ form a basis of $V$, then the polynomial $f_{\underline v}\in R$ determines the polynomial law $f$. 

A polynomial law $f$ on $V$ is said to be homogenous of degree $n$ if for every $A$-algebra $R$ and element $v\in V\otimes_A R$ we have $f(u m)=u^n f(m)$ for every $u\in R^\times$. If $V$ is a free $A$-module and $v_1,\ldots,v_d$ form a base of $V$, then $f$ is homogenous of degree $n$ if and only if  $f_{\underline v}$ is a homogenous polynomial of degree $n$. 

If $V$ is an $A$-module, we denote $\rT^n_A (V)$ the $n$th fold tensor power $V$ with itself over $A$ which is equipped with an action of the symmetric group $\fS_n$. We denote ${\rm TS}^n_A(V)$, the $n$th module of symmetric tensors of $V$ that is the submodule of $\rT^n_A (V)$ consisting of elements fixed under $\fS_n$, to be differentiated from ${\rm S}^n_A(V)$, the $n$th symmetric power of $V$ that is the largest quotient of $\rT^n_A (V)$ on which $\fS_n$ acts trivially. We have a map $V\to {\rm TS}^n V$ given by $v\mapsto v^{\otimes n}$. Roby proved that if $V$ is a free $A$-module, then there is a canonical bijection between the set of homogeneous polynomial laws $f$ of degree $n$ and the set of homogenous polynomial $h$ of degree $1$ on ${\rm TS}^e_A(V)$ characterized by the equality $f(v)=h(v^{\otimes n})$. We note that Roby states this theorem \cite[Theorem IV.1]{Roby:1963gn} with the divided power module instead of the symmetric tensors modules. These modules coincide however in the case where $V$ is a free $A$-module \cite[Propositions III.1, IV.5]{Roby:1963gn}.

If moreover, $V$ is an $A$-algebra, which is free as an $A$-module, and if $f$ is a multiplicative homogenous polynomial law of degree $n$ on $V$ i.e. if $f(xy)=f(x)f(y)$, then the corresponding degree 1 homogenous polynomial law on ${\rm TS}^n_A(V)$ is a homomorphism of algebras ${\rm TS}^e_A(V)\to A$ \cite[Proposition 2.5.1]{Ferrand}. 

We now consider the group $G=\GL_n$ whose Lie algebra is the space of matrices $M_n$ which is also equipped with a structure of algebras. Let $T_n$ be the diagonal torus of $\GL_n$ and $\ft_n$ its Lie algebra. The Weyl group $W$ is the symmetric group $\fS_n$ acting on $T_n$ and $\ft_n$ by permutation of coordinates. The commuting scheme of $\GL_n$ will be denoted by $\fC_n^d$. We will recall Deligne's construction of a $\GL_n$-invariant map
\[ s:  \mathfrak C^d_n \to \ft_n^d \sslash \fS_n\]
which roughly records the joint eigenvalues of set of commuting matrices. 

This map can be easily described at level of points in an algebraically closed field. If $x_1,\ldots,x_d\in \gl_n(k)$ are commuting matrices, we can equipped the $n$-dimensional vector space $V=k^d$ with a structure of $k[X_1,\ldots,X_d]$-module. Since $V$ is finite dimensional, it is supported by a finite subscheme of $\A^d=\Spec(k[X_1,\ldots,X_d])$
\[ V=\bigoplus_{\alpha\in k^n} V_\alpha \]
where $V_\alpha$ is annihilated by a power of the maximal ideal defining $\alpha$. We then set $s(\alpha)$ to be the 0-cycle
\[ s(V)=\sum_\alpha \dim(V_\alpha) \alpha\]
which is a $k$-point of $\ft_n^d \sslash \fS_n$. It is not clear how to generalize this cycle construction for commuting matrices with values in an arbitrary test ring $R$.

Now let $A$ be the coordinate ring of $\fC_n^d$ and  $\und x=(x_1,\ldots,x_d)\in \fC_n^d(A)$ the tautological $A$-point of $\fC_n^d$. Let $S=k[X_1,\ldots, X_d]$ be the polynomial algebra of variables $X_1,\dots,X_d$. The point $\und x$ gives rise to a homomorphism of algebras 
\[ p_{\und x}:S\otimes_k R\to \gl_n(A\otimes_k R) \]
with $p_{\und x}(X_i)=x_i$ for every $k$-algebra $R$. By composing with the determinant map we get a polynomial law on $S$
\[ f_{\und x}:S\otimes_k R\to A\otimes R\]
given by $f_{\und x}=\det\circ p_{\und x}$ which is homogenous of degree $n$ and multiplicative. 
By Roby's theorem, this is equivalent with a homomorphism of algebras
\[ h_{\und x}: {\rm TS}^n(S)\to A \]
which is a $R$-point of $\Spec({\rm TS}^n(S(V)))=\ft_n^d \sslash \fS_n$. Since $\det$ is $G$-invariant, $h_{\und x}$ is also $G$-invariant. As a result, we obtain the spectral data map 
\[s: {\rm TS}^n(S)\to A^G .\]
We note that this construction works without any restriction on the characteristic of the base field $k$.

\section{Multiplicative property of the Pfaffian}

Instead of the determinant, our construction of the spectral data map for symplectic groups relies on the Pfaffian function and its multiplicative property. The Pfaffian is a homogenous form of degree $n$ on the space of antisymmetric forms on $k^{2n}$. Since the Pfaffian is a square-root of the determinant one may ask the question whether it enjoys the same multiplicative property as the determinant. A priori the question is ill-posed for the product of antisymmetric matrices is not antisymmetric. We will show it is indeed possible to prove a multiplicative property of a function closely related the Pfaffian. This elementary result seems to be new and of independent interest. 

In this section, we assume that the base field $k$ is of characteristic zero or of odd prime characteristic.  Let $V$ be a $2n$-dimensional $k$-vector space. A bilinear form on $V$ is an element of the vector space $\Hom_k(V,V^*)$ which is equipped with an involution given by $x\mapsto x^*$. We observe that $\Hom_k(V,V^*)$ is canonically isomorphic to $V^*\otimes_k V^*$ equipped with the involution $v_1^*\otimes v_2^* \mapsto v_2^* \otimes v_1^*$. We have a decomposition into eigenspaces of this involution
\[ V^*\otimes_k V^* = \Sym^2 V^* \oplus \Lambda^2 V^* \]
where $\Sym^2 V^*$ corresponds tp the space of symmetric bilinear forms on $V$ and $\Lambda^2(V^*)$ the space of alternating bilinear forms on $V$. 

There is a canonical map
\[\Sym^n (\Lambda^2 V^*) \to \Lambda^{2n} V^* \]
given by the super-commutative multiplication law in the exterior algebra $\Lambda^\bullet V^*= \bigoplus_{i=0}^{2n} \Lambda^i V^*$. It follows that we have a degree $n$ homogenous polynomial law \[\mu:\Lambda^2 V^* \to \Lambda^{2n} V^*\]
which associates to an alternating form $\omega\in \Lambda^2 V^*$ the image of $\omega^n \in \Sym^n(\Lambda^2 V^*)$ in the line $\Lambda^{2n} V^*$.  We note that $\omega\in \Lambda^2 V^*$ is a non-degenerate alternating bilinear form if and only if $\mu(\omega)$ is a non-zero vector of $\Lambda^{2n} V^*$. If it is the case, we will say that $\omega$ is a symplectic form. 

We also note that $\mu$ is a square-root of the determinant in the following sense. A bilinear form $b$ on $V$ induces a bilinear form on the determinant line $\Lambda^{2n} V$
\[\det(b)\in \Hom_k(\Lambda^{2n} V, \Lambda^{2n} V^* )= \Lambda^{2n} V^* \otimes \Lambda^{2n} V^* .\]
Then for every alternating form $\omega\in\Lambda^2 V^*$ we have the identity
\begin{equation} \label{det-square-root}
	\det(\omega)=\mu(\omega)\otimes \mu(\omega).
\end{equation}

We will now fix a symplectic form $\omega_0\in \Lambda^2 V^*$ and consider the symplectic group $G=\Sp_{2n}$ of all linear transformations of $V$ that preserve $\omega_0$. The Lie algebra $\fg$ of $G$ is the subspace of $\gl(V)$ of matrices $x\in \gl(V)$ such that 
\[ \omega_0(xu,v)=-\omega_0(u,xv)\]
for all vectors $u,v \in V$. This is equivalent to say that the identity 
$\omega_0 x=-x^* \omega_0$ holds in $\Hom_k(V,V^*)$. In this case we have $(\omega_0 x)^*=-x^* \omega_0^*= x^* \omega_0=\omega_0 x$ and therefore $\omega_0 x\in \Sym^2 V^*$. In other words, the map $x\mapsto \omega_0 x$ induces an isomorphism of $k$-vector spaces $\fg\to \Sym^2 V^*$. 

We will also consider the decomposition $\gl(V)=\fg \oplus \fg^+$ where $\fg^+$ is the subspace of matrices $x\in \End(V)$ such that 
\[ \omega_0(xu,v)=\omega_0(u,xv)\]
for all vectors $u,v \in V$. The map $x\mapsto \omega_0 x$ induces an isomorphism of $k$-vector spaces $\fg^+\to \Lambda^2 V^*$. We will define a Pfaffian norm $\rN_+:\fg^+\to \A^1$ by the equality
\[ \rN_+(x) \mu(\omega_0)=\mu(\omega_0 x)\]
for every $k$-algebra $R$ and $x\in \fg^+(R)$. Note that since $\mu(\omega_0)$ is a generator of the free $R$-module $\Lambda^{2n} V^* \otimes_k R$ the above equality defines $\rN_+(x)\in R$ uniquely.

Applying the identity \eqref{det-square-root} to $x\omega_0 \in \Lambda^2 V^*$, we get the equality \[\det(\omega_0 x)= \mu(\omega_0 x)\otimes \mu(\omega_0 x)\] in the line $(\Lambda^{2n} V^*)^{\otimes 2}$. It follows that 
\begin{equation}\label{Norm-square-root}
	\det(x)=\rN_+(x)^2
\end{equation}
for all $x\in \fg^+$. Note that since this equality is valid for all $x\in \fg^+\otimes_k R$ for all $k$-algebra $R$, $\det=\rN_+^2$ can be seen as an equality in the coordinate ring of $\fg^+$. 

As a square-root of the determinant, we may expect that the function $\rN_+$ satisfies a multiplicative property as the determinant does. However, the multiplicativity does not make sense a priori as the subspace $\fg^+$ of $\gl(V)$ is not stable under matrix multiplication. We note that for $x,y\in \fg^+$, $xy\in \fg^+$ if and only if $xy=yx$. 
The multiplicicativity of the Pfaffian norm $\rN_+$ then makes sense as an identity in the coordinate ring of the commuting subscheme $\fC_{\fg^+}$ of $\fg^+\times \fg^+$.

\begin{proposition} \label{weak-multiplicativity}
	The equality $\rN_+(xy)=\rN_+(x)\rN_+(y)$ holds in the coordinate ring of $\fC_{\fg^+}$.
\end{proposition}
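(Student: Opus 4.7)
The plan is to reduce the statement to a sign-determination question via the square-root identity $\rN_+^2 = \det$ (equation \eqref{Norm-square-root}) and then pin down the sign at the identity point. Since $xy \in \fg^+$ on $\fC_{\fg^+}$, applying \eqref{Norm-square-root} to $xy$ together with multiplicativity of the determinant yields in $k[\fC_{\fg^+}]$
\begin{equation*}
\rN_+(xy)^2 = \det(xy) = \det(x)\det(y) = \rN_+(x)^2 \rN_+(y)^2,
\end{equation*}
which rearranges as
\begin{equation*}
\bigl(\rN_+(xy)-\rN_+(x)\rN_+(y)\bigr)\bigl(\rN_+(xy)+\rN_+(x)\rN_+(y)\bigr) = 0.
\end{equation*}
The desired identity is therefore equivalent to the sum factor $S := \rN_+(xy)+\rN_+(x)\rN_+(y)$ being a non-zero-divisor in $k[\fC_{\fg^+}]$, since then vanishing of the product will force vanishing of the difference.

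From $\rN_+(\id_V)\mu(\omega_0) = \mu(\omega_0 \cdot \id_V) = \mu(\omega_0)$ we read off $\rN_+(\id_V) = 1$, so $S$ evaluates to $2$ at the $k$-point $(\id_V,\id_V) \in \fC_{\fg^+}$. To spread this nonvanishing across the whole scheme, I would exploit the $\bA^1$-action $\sigma_t \colon (x,y) \mapsto (x+t\id_V, y+t\id_V)$ on $\fC_{\fg^+}$, well-defined because $\id_V \in \fg^+$ and commutes with everything. Each $\sigma_t$ is an automorphism (with inverse $\sigma_{-t}$), so the induced permutation of irreducible components of $\fC_{\fg^+}$ is a locally constant function of $t$ which is trivial at $t=0$; by irreducibility of $\bA^1$ it must be trivial for all $t$, so each $\sigma_t$ preserves every irreducible component setwise. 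On any orbit line $\{(x_0 + t\id_V, y_0 + t\id_V) : t \in \bA^1\}$, the function $S$ becomes a polynomial in $t$ with leading term $2t^{2n}$ (since $\rN_+(x_0 + t\id_V)$ has leading term $t^n$, and similarly for the other Pfaffian factors), so $S$ cannot vanish identically on any irreducible component.

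As a cross-check I would verify the identity directly on the open subscheme $\fC^\circ \subset \fC_{\fg^+}$ where $x$ is regular semisimple: the self-adjointness relation $\omega_0(xu,v) = \omega_0(u,xv)$ forces eigenspaces of distinct eigenvalues of $x$ to be $\omega_0$-orthogonal, and nondegeneracy of the alternating form $\omega_0$ restricted to each eigenspace forces it to have even dimension---exactly $2$ in the regular case. Any commuting $y \in \fg^+$ preserves this decomposition and acts by a scalar on each $2$-dimensional summand (since $\fg^+$ on a $2$-dimensional symplectic space consists only of scalar operators), so in a symplectic basis adapted to the decomposition both $x$ and $y$ become diagonal with eigenvalue pattern $(\lambda_1,\lambda_1,\dots,\lambda_n,\lambda_n)$ and $(\mu_1,\mu_1,\dots,\mu_n,\mu_n)$ respectively, and a direct Pfaffian computation gives $\rN_+(x)=\prod_i\lambda_i$, $\rN_+(y)=\prod_i\mu_i$, $\rN_+(xy)=\prod_i\lambda_i\mu_i$. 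The main obstacle is the last step: upgrading ``not contained in any minimal prime'' to ``non-zero-divisor'' in the possibly non-reduced ring $k[\fC_{\fg^+}]$, which amounts to controlling embedded associated primes of the commuting scheme---likely requiring either a Cohen--Macaulay or $S_1$ type property, or a more direct density argument that meets every associated component.
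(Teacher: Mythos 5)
You correctly reduce the problem to showing that $S := \rN_+(xy)+\rN_+(x)\rN_+(y)$ is a non-zero-divisor in $k[\fC_{\fg^+}]$, and you are also correct to flag at the end that your component-wise argument only shows $S$ lies outside every \emph{minimal} prime, not outside every \emph{associated} prime. That gap is fatal as stated: in a Noetherian ring, being a non-zero-divisor is equivalent to avoiding all associated primes (including embedded ones), and that is strictly stronger than avoiding minimal primes unless the ring satisfies $S_1$. Nothing in your argument establishes $S_1$ for $k[\fC_{\fg^+}]$. Indeed, the paper explicitly remarks after the proof that the ``simultaneous triangulation'' argument (essentially your regular-semisimple cross-check) does prove the identity in the \emph{reduced} quotient of $k[\fC_{\fg^+}]$, but that reducedness of $k[\fC_{\fg^+}]$ is unknown; $S_1$ is no easier. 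So the ``main obstacle'' you name is not a detail to be patched but the entire content of the problem, and the approach cannot be completed with what you have.

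The paper's proof avoids associated primes altogether by a different trick. It introduces fresh formal variables $\alpha, \beta$, replaces $(x,y)$ by the commuting pair $(1+\alpha x, 1+\beta y)$ over $R[\alpha,\beta]$, and forms $P_x = \rN_+(1+\alpha x)$, $P_y = \rN_+(1+\beta y)$, $P_{xy} = \rN_+((1+\alpha x)(1+\beta y))$. These are polynomials with constant term $1$, hence units in $R[[\alpha,\beta]]$. From $\det = \rN_+^2$ one gets $P_{xy}^2 = P_x^2 P_y^2$ in $R[[\alpha,\beta]]$, and the key point is that, since $\rm{char}(k) \neq 2$ so the squaring map $\Gm \to \Gm$ is \'etale, a unit power series over $R$ has a \emph{unique} square root with prescribed constant term (by lifting along the nilpotent thickenings $R[\alpha,\beta]/(\alpha,\beta)^N$). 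Matching constant terms $1=1\cdot 1$ forces $P_{xy} = P_x P_y$, and comparing coefficients of $\alpha^n\beta^n$ gives the desired identity. This works in an arbitrary $k$-algebra $R$, with no hypothesis on the geometry of $\fC_{\fg^+}$. The lesson is that the ambiguity of sign in taking a square root must be resolved ``formally'' (by pinning down a constant term over a complete local situation) rather than ``geometrically'' (by checking on components), precisely because the scheme may have embedded components.
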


\begin{proof}
	This is equivalent to prove that for every point $(x,y)\in \fC_{\fg^+}(R)$ with values in an arbitrary $k$-algebra $R$, the identity $\rN_+(xy)=\rN_+(x)\rN_+(y)$ holds in $R$. For this we introduce new formal variables $\alpha,\beta$ and consider commuting elements
	\[ 1+\alpha x, 1+\beta y \in \fg^+(R[\alpha,\beta])\]
	with values in the polynomial ring $R[\alpha,\beta]$. We also have
	\[(1+\alpha x)(1+\beta y)\in \fg^+(R[\alpha,\beta]) \]
	because $xy=yx$. We now have elements $P_x=\rN_+(1+\alpha x), P_y=\rN_+(1+\beta y)$ and $P_{xy}=\rN_+((1+\alpha x)(1+\beta y))$ in $R[\alpha,\beta]$ which are polynomial with free coefficients equal to $1$. We also note that $P_x\in R[\alpha]$ is a polynomial of degree at most $n$ in $\alpha$ whose coefficient of $\alpha^n$ is $\rN_+(x)$, $P_y\in R[\beta]$ is a polynomial of degree at most $n$ in $\beta$ whose coefficient of $\beta^n$ is $\rN_+(y)$, and $P_{xy}=\rN_+((1+\alpha x)(1+\beta y))\in R[\alpha,\beta]$ is a polynomial of degree at most $n$ in both variables $\alpha$ and $\beta$ whose coefficient of $\alpha^n \beta^n$ is $\rN_+(xy)$. To prove $\rN_+(xy)=\rN_+(x)\rN_+(y)$, it is enough to prove the equality of polynomials $P_{xy}=P_x P_y$. 
	
	Since the ring of polynomials $R[\alpha,\beta]$ embeds in the ring of formal series $R[[\alpha,\beta]]$, it is enough to prove the equality $P_{xy}=P_x P_y$ in $R[[\alpha,\beta]]$. We note that $P_x, P_y, P_{xy}$ are now invertible elements of $R[[\alpha,\beta]]$ which is a limit of thickening of $R$. Using the fact that the square map $\Gm\to \Gm$ is étale (here we use the assumption $\rm{char}(k)>2$), for every $g\in R[[\alpha,\beta]]^\times$ with free coefficient $g_0\in R^\times$, and for every square-root $f_0\in R^\times$ of $g_0$, there exists a unique $f\in R[[\alpha,\beta]]^\times$ with free coefficient $f_0$ such that $f^2=g$.

	Using the equality \eqref{Norm-square-root} we have $P_{xy}^2= P_x^2 P_y^2$. The fact that $P_x,P_y,P_{xy}$ have free coefficients 1 implies now the equality $P_{xy}=P_x P_y$ as formal series, and thus as polynomials.
\end{proof}

We observe that it is possible to prove the equality $\rN_+(xy)=\rN_+(x)\rN_+(y)$ holds for every point $\fC_{\fg^+}$ with value in a field using their simultaneous triangulation. This implies that the equality $\rN_+(xy)=\rN_+(x)\rN_+(y)$ holds in the reduced quotient of $k[\fC_{\fg^+}]$. However we do not know whether $k[\fC_{\fg^+}]$ is reduced.

\section{Spectral data map for symplectic groups}

Let $V$ be a $2n$-dimensional $k$-vector space equipped with a symplectic form $\omega_0\in \Lambda^2 V^*$. The group $G=\Sp_{2n}$ is the subgroup of $\GL_{2n}$ preserving $\omega_2$. The Lie algebra $\fg=\mathfrak{sp}_{2n}$ consists of elements $x\in \gl(V)$ such $x^* \omega_0=-\omega_0 x$. We have an orthogonal complement $\fg^+$ of $\fg$ in $\gl_{2n}$ consisting of $x\in\gl(V)$ such that $x^* \omega_0=\omega_0 x$. We observe that if matrices $x,y\in \fg$ are commuting matrices then we have $xy\in \fg^+$. We have also noted that for commuting elements $x,y\in \fg^+$ we have $xy\in \fg^+$. If $x\in \fg$ and $y\in \fg^+$ are commuting matrices then we have  $xy \in \fg$.

Let $A$ denote the coordinate ring $k[\fC_\fg^d]$ of the commuting scheme and $(x_1,\ldots,x_d)\in \fg(A)^d$ the universal sequence of commuting matrices. 
Let $S=k[X_1,\ldots,X_d]$ be the polynomial ring with $d$ variables. 
The commutation property implies that for every $k$-algebra $R$ there exists a morphism of rings 
\[ p: S\otimes_k R \to \mathfrak{gl}_{2n}(A\otimes_k R) \]
sending $X_i\mapsto x_i$. For every $\und a=(a_1,\ldots,a_d)\in \Z_{\geq 0}^d$ the image $p(X_1^{a_1} \ldots X_d^{a_d})$ lies in $\fg(R)$ or $\fg^+(R)$ depending on whether $a_1+\cdots+a_d$ is odd or even. If $S^+$ denotes the subalgebra of $S$ generated by the monomials $X_1^{a_1} \ldots X_d^{a_d}$ with $a_1+\cdots+a_d$ even then we have $p(S^+)\in \fg^+(R)$. Let us denote $p^+:S^+\otimes_k R\to \fg^+(A\otimes_k R)$ the restriction of $p$ to $S^+$. Composing with the Pfaffian norm $\rN_+:\fg^+\to \A^1$, we have a polynomial law 
\[ \rN_+:S^+\otimes_k R \to A\otimes_k R \]
which is homogenous of degree $n$ and multiplicative according to Proposition \ref{weak-multiplicativity}. By Roby's theorem, this gives rise to a morphism of $k$-algebras 
\[s:((S^+)^{\otimes n})^{\fS_n} \to A \]
satisfying $s(q^{\otimes n})= \rN_+(p^+(q))$
for all $q\in S^+$. Since $\rN_+$ is $G$-invariant, the induced map $s$ is also $G$-invariant, and as a result, the image of $s$ is contained in $A^G$.

Let $\tau$ denote the involution of $S$ given by $\tau(X_i)=-X_i$. 
We note that $S^+$ is the subalgebra of $S$ of fixed points of $\tau$. As a result, $((S^+)^{\otimes n})^{\fS_n}$ can be identified with the subalgebra of $S^{\otimes n}$ of fixed points under $(\Z/2\Z)^n \rtimes \fS_n$. This will permit us to identify $((S^+)^{\otimes n})^{\fS_n}$ with $k[\ft^d]^W$ via some explicit choice of the Cartan algebra. This choice will be ultimately irrelevant as we will prove that $s$ is an inverse to the map $c$ that doesn't depend on the choice of the Cartan algebra.

Let $V$ be now the standard $2n$-dimensional vector space $k^{2n}$ with basis $e_1,\ldots,e_{2n}$ and $\omega_0$ the standard symplectic form given by 
	\[ \omega_0(e_i,e_j)=\begin{cases}
		0 & \mbox{if } i+j \neq 2n+1\\
		1 & \mbox{if } i+j=2n+1, i\leq n \\
		-1 & \mbox{if } i+j=2n+1, i\geq n+1 \\
	\end{cases} \]
The subspace $\ft$ of diagonal matrices of the form $\diag(b_1,\ldots,b_n,-b_n,\ldots,-b_1)$ will then be a Cartan algebra of $\fg$ equipped with the obvious action of $W=\{ \pm 1\}^n \rtimes \fS_n$. The coordinate ring of $\ft$ is then the polynomial ring $k[b_1,\ldots,b_n]$ where the $b_i$ are the coordiantes given by entries of the diagonal matrix as above. Let $B=k[\ft^d]$ denote the coordinate ring of $\ft^d$ and $(y_1,\ldots,y_d)\in \ft^d(B)$ the tautological $B$-point of $\ft^d$. We consider the elements $b_j(y_i)\in B$ with $1\leq i \leq d$ and $1\leq j\leq n$ and the isomorphism of algebras $\beta:S^{\otimes n}\to B$ given by 
\begin{equation} \label{beta}
	\beta(X_{j,i})=b_j(y_i)
\end{equation}
where $X_{j,1},\ldots,X_{j,d}$ are the coordinates of the $j$th copy of $S$. By restriction  we have an isomorphism of algebras
\[ \beta:((S^+)^{\otimes n})^{\fS_n}= (S^{\otimes n})^{(\Z/2\Z)^n \rtimes \fS_n} \to k[\ft^d]^W.\] 
It follows that we have a morphism of algebras
\begin{equation} \label{spectral-data-map}
	s:k[\ft^d]^W\to k[\fC_\fg^d]^G
\end{equation} 
such that 
\begin{equation} \label{d-N}
	s(\beta(q^{\otimes n}))= \rN_+(p^+(q))
\end{equation}
for all $q\in\ S^+$. It remains to prove that the spectral data map $s$ is inverse to the Chevalley restriction.

\begin{theorem}\label{main}
The map $c:k[\fC_{\fg}^d]^G\to k[\ft^d]^W$
is an isomorphism with the inverse given by the spectral data map
$s:k[\ft^d]^W\to k[\fC_{\fg}^d]^G$ of \eqref{spectral-data-map}.
\end{theorem}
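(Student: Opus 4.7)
The plan is to prove that $c$ and $s$ are mutually inverse isomorphisms by checking $c\circ s=\mathrm{id}_{k[\ft^d]^W}$ directly on a spanning set, and then reducing $s\circ c=\mathrm{id}_{k[\fC_\fg^d]^G}$ to a verification on a system of trace generators furnished by Procesi's invariant theory. The key technical step is a generating function argument that relates the trace invariants to the Pfaffian norm through the identity $\rN_+^2=\det$ from \eqref{Norm-square-root}.

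For $c\circ s=\mathrm{id}$, both $c\circ s$ and the identity are $k$-algebra homomorphisms from $k[\ft^d]^W$ to itself; by Roby's theorem together with polarization (which holds in characteristic zero or large characteristic), they are determined by their values on the spanning set $\{\beta(q^{\otimes n}):q\in S^+\}$. Restricting the universal tuple $(x_1,\ldots,x_d)\in\fg(A)^d$ to $\ft^d(B)$ replaces each $x_i$ by the diagonal matrix $y_i=\diag(\xi_{1,i},\ldots,\xi_{n,i},-\xi_{n,i},\ldots,-\xi_{1,i})$ with $\xi_{j,i}=b_j(y_i)$. Writing $q(\xi_j)=q(\xi_{j,1},\ldots,\xi_{j,d})$, the evenness of $q\in S^+$ forces
\[
p^+(q)|_{\ft^d}=\diag\bigl(q(\xi_1),\ldots,q(\xi_n),q(\xi_n),\ldots,q(\xi_1)\bigr)\in\fg^+,
\]
and a direct evaluation shows that the Pfaffian norm of such a diagonal element is $\prod_{j=1}^n q(\xi_j)$, which equals $\beta(q^{\otimes n})$ by \eqref{beta}. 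Hence $c(s(\beta(q^{\otimes n})))=\beta(q^{\otimes n})$.

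For $s\circ c=\mathrm{id}$, I appeal to Procesi's generation theorem for $\Sp_{2n}$-invariants on $\gl_{2n}^d$: combined with $x^*=-x$ on $\fg$ and the commutation relations, it implies that $k[\fC_\fg^d]^G$ is generated as a $k$-algebra by the trace functions $\tau_{\und a}=\tr(x_1^{a_1}\cdots x_d^{a_d})$ for $\und a\in\NN^d$ (with $\tau_{\und a}=0$ when $|\und a|$ is odd). It therefore suffices to verify $s(c(\tau_{\und a}))=\tau_{\und a}$ on each such generator. Computing the restriction as above yields $c(\tau_{\und a})=2p_{\und a}$ for even $|\und a|$, where $p_{\und a}=\sum_{j=1}^n\xi_j^{\und a}$ is the symplectic power sum. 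To compute $s(p_{\und a})$ I use a generating parameter $t$ and the element $q_t=1+t X_1^{a_1}\cdots X_d^{a_d}\in S^+[[t]]$: then $\beta(q_t^{\otimes n})=\prod_{j=1}^n(1+t\xi_j^{\und a})$ while $\rN_+(p^+(q_t))=\rN_+(1+tx^{\und a})$, so \eqref{d-N} (extended to formal power series) yields
\[
s\!\left(\prod_{j=1}^n(1+t\xi_j^{\und a})\right)=\rN_+(1+tx^{\und a}).
\]
Taking formal logarithms of both sides, using \eqref{Norm-square-root} to replace $\log\rN_+$ by $\tfrac12\log\det$ and the standard identity $\log\det(1+tM)=\sum_{k\geq 1}(-1)^{k+1}t^k\tr(M^k)/k$ with $M=x^{\und a}$, comparison of the coefficients of $t$ gives $s(p_{\und a})=\tfrac{1}{2}\tau_{\und a}$, and therefore $s(c(\tau_{\und a}))=\tau_{\und a}$.

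The hard part is the reduction to Procesi's generation theorem together with the bookkeeping needed to express the trace generators as images under $s$; this is the analogue in the symplectic setting of the corresponding step in Vaccarino's proof for $\GL_n$, but is subtler because $s$ is built from the Pfaffian norm rather than the determinant. The generating function trick above, combined with the multiplicative property of the Pfaffian norm established in Proposition \ref{weak-multiplicativity} and the square-root relation $\rN_+^2=\det$, is what makes the passage from Pfaffian data to trace invariants possible.
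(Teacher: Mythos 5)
Your proposal is correct and close in spirit to the paper's proof, but the two arguments close the loop differently, so it is worth comparing. For $s\circ c=\mathrm{id}$ you do essentially what the paper does: reduce to Procesi trace generators $\tau_{\und a}=\tr(x_1^{a_1}\cdots x_d^{a_d})$, compute $c(\tau_{\und a})=2p_{\und a}$, and identify $s(p_{\und a})=\tfrac12\tau_{\und a}$ by a generating-function comparison using $\rN_+^2=\det$; your use of $\log\det$ and the coefficient of $t^1$ in $\rN_+(1+tx^{\und a})$ is just a cosmetic reparametrization of the paper's comparison of $t^{2n-1}$-coefficients in $\det(t-x^{\und a})$. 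Where you diverge is in establishing $c\circ s=\mathrm{id}$. The paper deduces it for free: since $\psi_{\und a}=c(\phi_{\und a})$ and the paper separately verifies that the $\psi_{\und a}$ generate $k[\ft^d]^W$, the identity $s(\psi_{\und a})=\phi_{\und a}$ yields $c(s(\psi_{\und a}))=\psi_{\und a}$ on a generating set. You instead compute $c\circ s$ directly on the classes $\beta(q^{\otimes n})$: restricting $p^+(q)$ to $\ft^d$ produces the diagonal matrix $\diag(q(\xi_1),\ldots,q(\xi_n),q(\xi_n),\ldots,q(\xi_1))$, whose Pfaffian norm is $\prod_j q(\xi_j)=\beta(q^{\otimes n})$, and you invoke polarization (valid in characteristic zero or large characteristic) to conclude that $\{\beta(q^{\otimes n}):q\in S^+\}$ spans, so two algebra maps agreeing there agree. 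Both routes are sound and of comparable length; the paper's trades the polarization/spanning argument for an explicit verification that the $\psi_{\und a}$ generate $k[\ft^d]^W$ (an easy symmetric-function observation), while yours gives a slightly more conceptual explanation of $c\circ s=\mathrm{id}$ as the statement that the diagonal Pfaffian norm recovers the tautological norm on $S^+$.
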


We need to show that the compositions $c\circ s$ and $s\circ c$ are equal to the identities. To this end, we introduce a set of generators for the rings $k[\ft^d]^W$ and $k[\frak C_{\frak g}^d]^G$ respectively, and then we check the desired property on those generators. Following Procesi as in \cite{Procesi:1976ep} these functions are constructed as certain traces. We only use the assumption that $k$ is of characteristic zero or of prime characteristic large enough to prove that these trace functions form a system of generators. We record this fact to facilitate the work of those who may want make the assumption on the characteristic explicit.

For every $\und a=(a_1,\ldots,a_d)\in \bZ_{\geq 0}^d$, we define the element $\phi_{\und a}\in A$ given by 
\begin{equation}
\phi_{\und a}=\tr(x_1^{a_1}\cdots x_d^{a_d}) 
\end{equation}
where $(x_1,\ldots,x_d)\in \fC_\fg^d(A)$ is the universal point of the commuting scheme. Since the trace is $G$-invariant, we have $\phi_{\und a}\in A^G$. We note that if $a_1+\cdots+a_d$ is odd then $x_1^{a_1}\cdots x_d^{a_d}\in \fg(A)$ and if $a_1+\cdots+a_d$ is even then $x_1^{a_1}\cdots x_d^{a_d}\in \fg^+(A)$. It follows that $\phi_{\und a}=0$ if $a_1+\cdots+a_d$ is odd. For this reason we will only consider the functions $\phi_{\und a}$ with $a_1+\cdots+a_d$ even.

\begin{proposition} The functions $\phi_{\und a}$ with $\und a\in \bZ^d_{\geq 0}$ form a set of generators of $k[\fC_{\fg}^d]^G$. 
\end{proposition}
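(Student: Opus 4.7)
The plan is to reduce the statement to the well-known first fundamental theorem of invariant theory for the symplectic group, due to Procesi. Because $\fC_\fg^d$ is a closed $G$-stable subscheme of $\fg^d$, the restriction map $k[\fg^d]\to k[\fC_\fg^d]$ is a surjection of $G$-modules. Under our standing assumption that $k$ has characteristic zero (or a large enough prime characteristic), the symplectic group $\Sp_{2n}$ is linearly reductive, so taking $G$-invariants preserves surjections and we obtain a surjection
\[ k[\fg^d]^G \twoheadrightarrow k[\fC_\fg^d]^G. \]
Hence it suffices to exhibit a family of generators of $k[\fg^d]^G$ whose restriction to $\fC_\fg^d$ lies in the $k$-span of the $\phi_{\und a}$.

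For that I would invoke Procesi's theorem for $\Sp_{2n}$: $k[\fg^d]^G$ is generated by the noncommutative trace functions $\tr(x_{i_1} x_{i_2}\cdots x_{i_k})$, where $(i_1,\ldots,i_k)$ runs over all finite sequences in $\{1,\ldots,d\}$. In Procesi's general formulation, the generators are traces of monomials in the $x_i$ and their symplectic-transposes $x_i^\star$; but for $x_i\in \fg=\mathfrak{sp}_{2n}$ one has $x_i^*\omega_0=-\omega_0 x_i$, so $x_i^\star$ differs from $x_i$ only by a sign, and Procesi's generators collapse to traces of plain monomials. The characteristic hypothesis enters precisely, and only, at this invocation.

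The final step is immediate. On $\fC_\fg^d$ the universal matrices $x_1,\ldots,x_d$ commute, so for any sequence $(i_1,\ldots,i_k)$ one has
\[ x_{i_1} x_{i_2}\cdots x_{i_k} = x_1^{a_1}\cdots x_d^{a_d} \qquad \text{where } a_j=\#\{\ell \mid i_\ell=j\}. \]
Therefore the restriction of every Procesi generator $\tr(x_{i_1}\cdots x_{i_k})$ to $\fC_\fg^d$ is exactly $\phi_{\und a}$ for the multi-exponent $\und a=(a_1,\ldots,a_d)$. Combined with the surjectivity of the first paragraph, this proves the proposition.

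The only genuinely nontrivial ingredient is Procesi's first fundamental theorem for $\Sp_{2n}$; the rest is bookkeeping. If one wanted to avoid citing Procesi, a more pedestrian route would be to show directly that the $\phi_{\und a}$ separate closed $G$-orbits on $\fC_\fg^d(\bar k)$ and invoke normality of the eventual target $\ft^d\sslash W$; this is less clean, and I would not recommend it.
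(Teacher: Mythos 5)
Your proof is essentially the same as the paper's: both invoke Procesi's first fundamental theorem for $\Sp_{2n}$ (traces of monomials in the $x_i$ and their symplectic transposes generate the invariant ring), use linear reductivity to restrict to a closed $G$-stable subscheme, observe that $x^\star=-x$ on $\fg$ so the transpose generators are redundant, and then use commutativity on $\fC_\fg^d$ to collapse trace monomials to the $\phi_{\und a}$. The only cosmetic difference is that you restrict from $\fg^d$ while the paper restricts directly from $\gl(V)^d$; this changes nothing of substance.
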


\begin{proof}
The first statement is consequence of a result of Procesi \cite[Theorem 10.1]{Procesi:1976ep}. Procesi describe a set of functions on $\gl(V)^d$ invariant under the diagonal action of the symplectic group $G$ which generate the ring $k[\gl(V)^d]^G$. Since $\fC_{\fg}^d$ is clearly a closed subscheme of $\gl(V)^d$,, the restriction map $k[\gl(V)^d]^G\to k[\fC_{\fg}^d]^G$ is surjective (this is true in characteristic zero because $G$ is linearly reductive, and therefore it is also true for prime characteristic large enough). It follows that the restriction of Procesi's functions form a set of generators of $k[\fC_{\fg}^d]^G$. Using the fact that $x_1,\ldots,x_d$ are commuting elements of the symplectic Lie algebra, it is easy to see that Procesi's functions restrict to our functions $\phi_{\und a}$. 
\end{proof}

\begin{proposition}
	If $\psi_{\und a}=c(\phi_{\und a})$ then the functions $\psi_{\und a}$ generate $k[\ft^d]^W$ and we have 
	\[s(\psi_{\und a})=\phi_{\und a}.\] 
\end{proposition}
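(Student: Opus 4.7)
We handle the two assertions separately. For the generation claim, we begin by computing $\psi_{\und a} = c(\phi_{\und a})$ explicitly. Since a point $y_i\in\ft$ is a diagonal matrix $\diag(b_1(y_i),\ldots,b_n(y_i),-b_n(y_i),\ldots,-b_1(y_i))$, a direct trace computation gives
\[
\psi_{\und a} = \bigl(1 + (-1)^{a_1+\cdots+a_d}\bigr)\sum_{j=1}^n b_j(y_1)^{a_1}\cdots b_j(y_d)^{a_d},
\]
which vanishes if $a_1+\cdots+a_d$ is odd and otherwise equals twice $\sum_j b_j(y_1)^{a_1}\cdots b_j(y_d)^{a_d}$. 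Via $\beta$ of \eqref{beta}, $\psi_{\und a}/2$ corresponds to the ``polarized power sum'' $\sum_{j=1}^n q_{(j)}\in((S^+)^{\otimes n})^{\fS_n}$, where $q = X_1^{a_1}\cdots X_d^{a_d}\in S^+$ and $q_{(j)} := 1\otimes\cdots\otimes q\otimes\cdots\otimes 1$ places $q$ in the $j$-th tensor slot. We then invoke the classical result that, for any commutative $k$-algebra $R$ with $\mathrm{char}(k)=0$ or sufficiently large, $(R^{\otimes n})^{\fS_n}$ is generated as a $k$-algebra by the power sums $\sum_j r_{(j)}$ with $r\in R$; applied to $R = S^+$ and transported through $\beta$, this yields generation of $k[\ft^d]^W$ by the $\psi_{\und a}$.

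For the identity $s(\psi_{\und a}) = \phi_{\und a}$, we employ a polarization argument based on Roby's theorem, which identifies the morphism $s\circ\beta^{-1}\colon{\rm TS}^n(S^+)\to A$ with the homogeneous polynomial law $f := \rN_+\circ p^+\colon S^+\to A$ via the identity $f(u) = (s\circ\beta^{-1})(u^{\otimes n})$. We introduce a formal variable $t$ and apply this identity to $u = 1 + tq \in S^+\otimes_k k[t]$, with $q = X_1^{a_1}\cdots X_d^{a_d}$ and $a_1+\cdots+a_d$ even. Expanding $(1+tq)^{\otimes n}$ as a sum indexed by subsets of $\{1,\ldots,n\}$ shows that its coefficient of $t^1$ equals $\sum_j q_{(j)} = \beta^{-1}(\psi_{\und a}/2)$. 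Equating $t^1$-coefficients on both sides of the Roby identity then yields
\[
s(\psi_{\und a}/2) = \text{coefficient of } t \text{ in } \rN_+(1 + tx) \in A[t],
\]
where $x = x_1^{a_1}\cdots x_d^{a_d}\in\fg^+(A)$.

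Finally, we compute this $t$-linear coefficient using the identity $\rN_+^2 = \det$ of \eqref{Norm-square-root}. Since $\rN_+(1) = 1$ (as $\mu(\omega_0\cdot 1) = \mu(\omega_0)$), we may write $\rN_+(1+tx) = 1 + c_1 t + O(t^2)$ in $A[t]$. Squaring and comparing with $\det(1+tx) = 1 + \tr(x)\,t + O(t^2)$ gives $2c_1 = \tr(x) = \phi_{\und a}$, and since $\mathrm{char}(k)\neq 2$ we conclude $c_1 = \phi_{\und a}/2$, whence $s(\psi_{\und a}) = \phi_{\und a}$. The main subtlety in this plan is setting up Roby's correspondence correctly in the relative setting and verifying that the polarization via the formal variable $t$ genuinely recovers $s$ on the chosen power-sum generators; this relies ultimately on the multiplicativity of $\rN_+$ (Proposition \ref{weak-multiplicativity}), which is precisely what makes $s$ a well-defined $k$-algebra homomorphism to which Roby's theorem applies.
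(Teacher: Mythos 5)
Your proof is correct and takes essentially the same approach as the paper: compute $\psi_{\und a}$ as twice a power sum, invoke the classical generation of symmetric-tensor invariants by power sums, and then use Roby's identity $s(\beta(u^{\otimes n}))=\rN_+(p^+(u))$ with a test ring $k[t]$ together with the square-root identity $\det=\rN_+^2$. The only difference is a reparameterization of the auxiliary element --- you polarize with $u=1+tq$ and extract the linear $t$-coefficient of $\rN_+(1+tx)$, while the paper uses $\theta_{\und a}=t-q$ and compares the $t^{2n-1}$-coefficients of $\det(p^+(\theta_{\und a}))$; under $t\mapsto 1/t$ these are the same computation.
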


\begin{proof}
Let $B=k[\ft^d]$ denote the coordinate ring of $\ft^d$ and $(y_1,\ldots,y_d)\in \ft^d(B)$ the tautological $B$-point of $\ft^d$. We have observed that $B$ is a polynomial algebras of the variables $b_j(y_i)$ for $1\leq i\leq d$ and $1\leq j\leq n$.
By computing the trace of the matrix $y_1^{a_1}\cdots y_d^{a_d}$ we get 
\begin{equation}
	\psi_{\und a}=
	\begin{cases} 
	0 & \mbox{if } a_1+\cdots+a_d\mbox{ is odd,}\\
	2 \sum_{j=1}^n \prod_{i=1}^d b_j(y_i)^{a_i} & \mbox{if } a_1+\cdots+a_d\mbox{ is even,}
	\end{cases}
\end{equation} 
We derive from these explicit formulas that the functions $\psi_{\und a}$ for $\und a\in \Z_{\geq 0}^d$ with $a_1+\cdots+a_d$ even generate $B^W$. Here we need to assume the characteristic of $k$ is either zero or no less than $2n$ in order to perform usual manipulation with symmetric polynomials. 

In order to prove the equality $s(\psi_{\und a})=\phi_{\und a}$ for $\und a\in \Z_{\geq 0}^d$ with $a_1+\cdots+a_d$ even, we will calculate the determinant of the element \[p^+(\theta_{\und a})=t \id - x_1^{a_1}\cdots x_d^{a_d}\in \fg^+(A\otimes_k R)\] 
where $\theta_{\und a}= t-X_1^{a}\cdots X_d^{a_d} \in R\otimes_k S^+$ with the test ring $R=k[t]$ being the algebra of polynomials in one variable $t$. 

On the one hand, with the usual formula for the characteristic polynomial we have
\[ \det(p^+(\theta_{\und a}))=t^{2n}-\phi_{\und a}\, t^{2n-1} +\mbox{terms of lower degrees in }t
  \]
On the other hand after \eqref{Norm-square-root} we have $\det(p^+(\theta_{\und a}))=\rN_+(\theta_{\und a})^2$
where $\rN_+(\theta_{\und a})= s(\beta(\theta_{\und a}^{\otimes n}))$ by \eqref{d-N}. Using the identification \eqref{beta}, the image of the element $\theta_{\und a}^{\otimes n}\in R\otimes_k S^{\otimes n}$ in $R\otimes B$ is given by the formula
\begin{eqnarray*}
	\beta(\theta_{\und a}^{\otimes n})& = & \prod_{j=1}^n (t-\prod_{i=1}^d b_j(y_i)^{a_i})\\ 
	& = & t^n - \sum_{j=1}^n \prod_{i=1}^d b_j(y_i)^{a_i} t^{n-1} + \mbox{terms of lower degrees in }t
\end{eqnarray*}
It follows that
\[ \det(p^+(\theta_{\und a}))= t^{2n} - s(\psi_{\und a})\, t^{2n-1} + \mbox{terms of lower degrees in }t \]
Comparing the coefficient of $t^{2n-1}$ in the two polynomial expression of $\det(p^+(\theta_{\und a}))\in A[t]$ we derive the desired equality $s(\psi_{\und a})=\phi_{\und a}$. 
 \end{proof}
 
To finish the proof of  Theorem \ref{main} we observe that the 
propositions above imply that the
 compositions $c\circ s$ and $c\circ s$ are equal to the identities
 on the generators $\psi_{\und a}$ and $\phi_{\und a}$ respectively.

\section*{Acknowledgement}

The research of Tsao-Hsien Chen is supported by 
NSF Grant DMS-2001257 and the S. S. Chern Foundation. The research of \foreignlanguage{vietnamese}{Ngô Bảo Châu} is supported by NSF grant DMS-1702380 and the Simons foundation.


{}

\begin{thebibliography}{}

\bibitem{Chen:2020iu} Tsao-Hsien Chen and \foreignlanguage{vietnamese}{Ngô Bảo Châu}. {\it On the Hitchin morphism for higher dimensional varieties}. Duke Mathematical Journal. {\bf 169} (2020), 1971-2004.

\bibitem{Deligne} Pierre Deligne. {\it Cohomologie a supports propres}. 
Th\'eorie des Topos et Cohomologie Etale
des Sch\'emas. Lecture Notes in Mathematics. {\bf305} (1973), Springer, Berlin, Heidelberg, 250-480.

\bibitem{Ferrand} Daniel Ferrand. {\it Un foncteur norme}. 
Bull. Soc. Math. France. {\bf126}
(1998), 1-49.

\bibitem{Procesi:1976ep} Claudio Procesi. {\it The invariant theory of $n\times n$ matrices}. 
Adv Math. {\bf 19} (1967), 306-381.

\bibitem{Roby:1963gn} Norbert Roby. {\it Lois polyn\^omes et lois formelles en th\'eorie des modules}. 
Annales scientifiques de I'\'Ecole normale sup\'erieure. {\bf 80} (1963), 213-348.




\bibitem{Vaccarino:2007wo} Francesco Vaccarino. {\it Linear representation, symmetric products and the commuting scheme}. Journal of Algebra. {\bf 317}  (2007), 634-641.

\end{thebibliography}
\end{document}